\numberwithin{equation}{section}
\newtheorem{thm}[equation]{Theorem}
\newtheorem{prop}[equation]{Proposition}
\newtheorem{lemma}[equation]{Lemma}
\newtheorem{cor}[equation]{Corollary}
\theoremstyle{remark}
\newtheorem*{remark}{Remark}
\newtheorem{defn}[equation]{Definition}
\newcommand{\F}{\mathbb{F}}
\newif\ifpdf
\begin{document}

\title[Planar functions and perfect nonlinear monomials]{Planar functions and perfect nonlinear monomials over finite fields}

\author{Michael E. Zieve}
\address{
  Department of Mathematics,
  University of Michigan,
  530 Church Street,
  Ann Arbor, MI 48109-1043 USA
}
\email{zieve@umich.edu}
\urladdr{http://www.math.lsa.umich.edu/$\sim$zieve/}

\date{January 21, 2013}

\thanks{The author was partially supported by NSF grant DMS-1162181.}
 
%#######################################################################
%#######################################################################

% \tableofcontents
\begin{abstract}
The study of finite projective planes involves planar functions, namely,
functions $f\colon\F_q\to\F_q$ such that, for each $a\in\F_q^*$, the function $c\mapsto f(c+a)-f(c)$
 is a bijection on $\F_q$.  Planar functions are also used in the construction of DES-like cryptosystems, where they are called perfect nonlinear functions.  
We determine all planar functions on $\F_q$ of the form $c\mapsto c^t$,
under the assumption that $q\ge (t-1)^4$.
This implies two recent conjectures of Hernando, McGuire and Monserrat.
Our arguments also yield a new proof of a conjecture of Segre and Bartocci from 1971
concerning monomial hyperovals in finite Desarguesian projective planes.
\end{abstract}

\maketitle

%#######################################################################
%#######################################################################
%#######################################################################

\section{Introduction}
Let $q=p^r$ where $p$ is prime and $r$ is a positive integer.
A \textit{planar function} is a function
$f\colon\F_q\to\F_q$ such that, for every $a\in\F_q^*$, the function $c\mapsto f(c+a)-f(c)$ is a bijection on $\F_q$.
Planar functions can be used to construct finite projective planes, and they have been studied by finite geometers since 1968 \cite{DO}.
They arose more recently in the cryptography literature where they are called \textit{perfect nonlinear functions} \cite{NK}, the idea being that these functions are optimally resistant to linear and differential cryptanalysis when used in DES-like cryptosystems.
Many authors have investigated the planarity of monomial functions $f(x)=x^t$ with $t>0$.
Since $x^t$ is planar on $\F_q$ if and only if $x^{t+q-1}$ is planar, and likewise if and only if $x^{tp}$ is planar, the study of planar monomials reduces at once to the case that $t<q$ and $p\nmid t$.

\stepcounter{equation}
\stepcounter{equation}
\stepcounter{equation}
The only known examples of planar monomials $x^t$ over $\F_{p^r}$
with $p\nmid t$ and $t<p^r$ are
\begin{enumerate}
\item[(1.1)] $t=p^i+1$ if $0\le i<r$ and $p\frac{r}{\gcd(i,r)}$ is odd; \,and
\item[(1.2)] $t=\frac{3^i+1}2$ if $p=3$ and $2<i<r$ and $\gcd(i,2r)=1$.
\end{enumerate}
A folk conjecture in the subject asserts that there are no further examples.  This is known to be true for $r=1$ \cite{J} and $r=2$ \cite{C},
and also for $r=4$ if $p>3$ \cite{CL}.  
However, as noted in \cite{CLrev}, the methods used in these papers will likely not extend to much larger values of $r$.
In this paper we prove this conjecture
for all large $r$:

\begin{thm}\label{main}
If $x^t$ is a planar function on $\F_{p^r}$, 
where $p^r\ge (t-1)^4$ and $p\nmid t$,
then either 
\emph{(1.1)} or \emph{(1.2)} holds.
\end{thm}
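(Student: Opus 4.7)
The plan is to translate planarity into a non-existence statement for $\F_q$-points on an explicit plane curve, apply the Hasse--Weil bound to force a rigid factorisation constraint on that curve, and then analyse the constraint to show that it is met only by the two known families.

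\textbf{Step 1: Reduction to a curve.} Writing $c = au$ in $(c+a)^t - c^t = a^t\bigl((u+1)^t - u^t\bigr)$ shows that planarity of $x^t$ on $\F_q$ is equivalent to the bijectivity of $\phi(X) := (X+1)^t - X^t$ on $\F_q$. Since $(X-Y)$ divides $\phi(X) - \phi(Y)$, we set
\[
F(X,Y) := \frac{(X+1)^t - X^t - (Y+1)^t + Y^t}{X - Y} \in \F_p[X,Y].
\]
This polynomial is symmetric of total degree $t - 2$, with leading form $t(X^{t-1}-Y^{t-1})/(X-Y)$, which is nonzero because $p \nmid t$. Planarity is then equivalent to: \emph{$F$ has no zero $(a,b) \in \F_q^2$ with $a \neq b$.}

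\textbf{Step 2: Hasse--Weil.} Suppose $F$ admitted an absolutely irreducible factor $G \in \F_q[X,Y]$ of some degree $d \leq t - 2$. By the Hasse--Weil bound for plane curves, $\{G=0\}$ has at least $q + 1 - (d-1)(d-2)\sqrt{q} - O_d(1)$ affine $\F_q$-rational points. Subtracting the at most $d$ points on the diagonal $X = Y$ still leaves at least $q - (d-1)(d-2)\sqrt{q} - O(d)$ off-diagonal zeros of $G$, hence of $F$. Since $q \geq (t-1)^4$ gives $\sqrt{q} \geq (t-1)^2 > (d-1)(d-2)$, this count is strictly positive for every $t \geq 3$, contradicting planarity. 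Consequently, $F$ has no absolutely irreducible factor defined over $\F_q$.

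\textbf{Step 3: Identifying the exceptional families.} It remains to prove that if $F$ has no absolutely irreducible $\F_q$-factor, then $t$ must lie in family (1.1) or (1.2). The plan is to analyse the absolutely irreducible decomposition of $F$ over $\overline{\F_p}$ via its behaviour at infinity. The leading form splits as $t \prod_{\zeta}(X - \zeta Y)$ over nontrivial $(t-1)$-st roots of unity $\zeta$, and each such $\zeta$ indexes a branch of the projective closure of $\{F = 0\}$ at infinity. Using Newton polygons or Hensel lifting, each Galois orbit of branches under Frobenius corresponds to a factor of $F$ defined over $\F_p$, and one argues that outside the exceptional cases such an orbit lifts to an absolutely irreducible $\F_p$-factor. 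The only escape routes are: (i) $t - 1 = p^i$, whence $\phi$ becomes affine and $F = (X-Y)^{p^i-1} + 1$ factors into linear polynomials $X - Y - \alpha$ with $\alpha^{p^i-1} = -1$, permuted with no $\F_q$-fixed point exactly when the parity condition of (1.1) holds; and (ii) the sporadic characteristic-3 family (1.2), where particular cancellations prevent the formation of any $\F_p$-factor.

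\textbf{Main obstacle.} The hardest part is clearly Step 3: proving uniformly in $t$ that only the two exceptional families avoid the formation of an absolutely irreducible $\F_p$-factor. The analysis will require a careful case treatment based on the factorisation of $t-1$ (and its interplay with $p$), combined with an explicit branch-by-branch analysis at infinity. This same geometric framework should apply to the structurally analogous polynomial arising for monomial hyperovals in characteristic 2, which is why the argument simultaneously yields the Segre--Bartocci conjecture.
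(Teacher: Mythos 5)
Your Steps 1 and 2 are fine, but they only reproduce the standard (and easy) reduction: planarity on a single field $\F_q$ with $q\ge (t-1)^4$ forces $\hat F(x)=(x+1)^t-x^t$ to be an exceptional polynomial, equivalently forces $F(X,Y)$ to have no absolutely irreducible factor over $\F_q$. This is exactly the content of Proposition~\ref{weil} in the paper. The genuine gap is Step 3, which is the entire theorem: you assert that ``one argues that outside the exceptional cases such an orbit lifts to an absolutely irreducible $\F_p$-factor,'' but you give no mechanism for this, and the proposed mechanism (branch-by-branch analysis at infinity of the curve $F=0$, organized by Frobenius orbits on $(t-1)$-st roots of unity) is precisely the strategy of Hernando--McGuire--Monserrat, which the paper explicitly sets out to avoid: it required dozens of pages of singularity computations and, even so, only succeeded under eight auxiliary hypotheses on $t$ and $p$. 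Two concrete failure points in your sketch: (i) when $p\mid t-1$ the leading form $t\prod_\zeta(X-\zeta Y)$ has repeated factors, the branches at infinity are wildly ramified, and the orbit-to-factor correspondence you invoke breaks down (this is why the paper must quote Leducq's separate result for $t\equiv 1\pmod p$); (ii) your ``escape route (ii)'' for the characteristic-$3$ family is not an argument at all --- identifying exactly which cancellations prevent an $\F_p$-rational absolutely irreducible factor, uniformly in $t$, is the hard part, and nothing in your outline isolates $t=(3^i+1)/2$.

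The paper's route after the exceptionality reduction is entirely different and is what makes the theorem tractable: it decomposes $F(x)=(x+2)^t-(x-2)^t$ into indecomposables, applies the classification of indecomposable exceptional polynomials of degree coprime to $p$ (linear twists of $x^m$ or of Dickson polynomials $D_n(x,a)$), uses parity lemmas to pin down the linear twists, and then exploits the functional equation $D_n(x+a/x,a)=x^n+(a/x)^n$ to convert the Dickson case into the statement that an explicit Laurent polynomial lies in $\F_p[x^n,x^{-n}]$. Vanishing of the coefficients of $x^{t-3}$, $x^{t-5}$, $x^{t-7}$ then forces $2t\equiv 1\pmod p$, and Lucas's theorem applied to $\binom{2t}{i}$ forces $2t=1+p^s$, from which (1.2) follows. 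None of these steps appears in your proposal, and without them (or a complete execution of the HMM-style curve analysis, which is a major undertaking in its own right) Step 3 remains unproved.
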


Note that each of the known planar monomials over $\F_q$ has the property that it is also planar over $\F_{q^k}$ for infinitely many integers $k$.  One consequence of our result is that no other planar monomials have this property:

\begin{cor} \label{maincor}
For any prime $p$ and any positive integer $t$, the function $c\mapsto c^t$ is a planar function on $\F_{p^k}$ for infinitely many $k$ if and only if either
\begin{itemize}
\item $t=p^i+p^j$ where $p$ is odd and $i\ge j\ge 0$; \,or
\item $t=\frac{3^i+3^j}2$ where $p=3$ and $i>j\ge 0$ with $i\not\equiv j\pmod{2}$.
\end{itemize}
\end{cor}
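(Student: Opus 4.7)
The ``if'' direction is essentially a direct appeal to the families (1.1) and (1.2). Given $t = p^i + p^j$ with $p$ odd and $i \ge j \ge 0$, one factors $t = p^j(p^{i-j}+1)$ and uses the Frobenius identity
\[
(c+a)^t - c^t = \bigl((c+a)^{p^{i-j}+1} - c^{p^{i-j}+1}\bigr)^{p^j},
\]
valid in characteristic $p$, to conclude that $c \mapsto c^t$ is planar on $\F_{p^k}$ if and only if $c \mapsto c^{p^{i-j}+1}$ is; by (1.1), the latter happens for infinitely many $k$ (any $k > i-j$ with $k/\gcd(i-j,k)$ odd works, and such $k$ exist since $p$ is odd). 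The analogous argument via (1.2) handles the second family when $i - j \ge 3$, while the degenerate case $i - j = 1$ reduces to $c \mapsto c^{2\cdot 3^j}$, which is planar on every $\F_{3^k}$.

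For the ``only if'' direction, the plan is to reduce to Theorem \ref{main}. Writing $t = p^j t_0$ with $p \nmid t_0$, the same Frobenius identity shows that $c \mapsto c^t$ and $c \mapsto c^{t_0}$ are simultaneously planar on every $\F_{p^k}$. Assume $c \mapsto c^{t_0}$ is planar on $\F_{p^k}$ for infinitely many $k$. For all but finitely many such $k$ we have $p^k \ge (t_0-1)^4$, so Theorem \ref{main} forces $t_0$ to satisfy (1.1) or (1.2) for that $k$. Since $t_0$ is a fixed integer, and since the equations $t_0 = p^i + 1$ and $t_0 = (3^i+1)/2$ each determine the exponent $i$ uniquely, a single value of $i$ must recur for infinitely many $k$ in at least one of the two families.

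Finally I would translate this back. In the first family, $t_0 = p^i + 1$, and the requirement in (1.1) that $pk/\gcd(i,k)$ be odd for infinitely many $k$ forces $p$ to be odd; then $t = p^j(p^i+1) = p^{i+j} + p^j$, which is the first alternative of the corollary. In the second family, $p = 3$ and $t_0 = (3^i+1)/2$ with $i > 2$, and the requirement $\gcd(i,2k) = 1$ for infinitely many $k$ forces $i$ to be odd; then $t = 3^j(3^i+1)/2 = (3^{i+j} + 3^j)/2$ with $i+j \not\equiv j \pmod 2$, which is the second alternative. The only real content beyond Theorem \ref{main} is this bookkeeping; the main obstacle lies entirely in the theorem itself, not in deriving the corollary from it.
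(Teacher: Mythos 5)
Your proof is correct, and the ``only if'' half is essentially identical to the paper's: write $t=p^jt_0$ with $p\nmid t_0$, observe via Frobenius that $c\mapsto c^t$ and $c\mapsto c^{t_0}$ are planar on the same fields, apply Theorem~\ref{main} to one sufficiently large $k$, and translate the conditions in (1.1)/(1.2) (namely $p\,r/\gcd(i,r)$ odd, resp.\ $\gcd(i,2r)=1$) into ``$p$ odd'', resp.\ ``$i$ odd'', to land in the two bullets of the corollary. The only divergence is in the ``if'' half: you reduce, again via Frobenius, to the families (1.1) and (1.2) and cite those as known planar monomials, whereas the paper gives a self-contained verification --- for $t=p^i+p^j$ it computes $(x+1)^t-x^t-1=x^{p^i}+x^{p^j}$ and analyzes the kernel of this additive map (the $(p^{i-j}-1)$-th roots of $-1$), and for $t=\frac{3^i+3^j}{2}$ it identifies $(x+1)^t-x^t$ with $-D_s(x-1,1)$, $s=\frac{3^i-3^j}{2}$, and invokes Dickson's exceptionality criterion. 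Your route is shorter but outsources the planarity of (1.1) and (1.2) to the literature (Dembowski--Ostrom and Coulter--Matthews), while the paper's is self-contained; your explicit handling of the degenerate case $i-j=1$ (where $t_0=2$) is a point the paper's Dickson identity absorbs automatically. Both are valid.
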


This corollary resolves two conjectures of Hernando, McGuire and Monserrat \cite[Conjectures PN2 and PN3]{HMM}.  It is the first known characterization of the known planar monomials among all planar monomials.

Theorem~\ref{main} (in a slightly weaker form) was proved in the case $t\equiv 1\pmod{p}$ by
Leducq \cite{L}.  Thus, the bulk of our effort addresses the case $t\not\equiv 1\pmod{p}$.
In this case, Theorem~\ref{main} (again in a slightly weaker form) was proved in \cite{HMM} if $t$ and $p$
satisfy any of eight different conditions.  We show that none of these extra conditions are needed.
Our approach is quite different from that of \cite{L} and \cite{HMM}.  Whereas those papers rely on dozens of pages of computations involving the singularities of an associated (possibly singular and reducible) plane curve, we focus on the functional decomposition of a certain univariate polynomial.  In particular, our most novel contribution is a method for testing whether a polynomial can be written as a function of a Dickson polynomial.

After reviewing some background material in the next section, we prove
Theorem~\ref{main} and Corollary~\ref{maincor} in Sections 3 and 4, respectively.  Then in Section 5 we show that our techniques yield a simple proof of the Segre--Bartocci conjecture about hyperovals in Desarguesian projective planes.

%#######################################################################
%#######################################################################

\section{Background results}

In this section we present the known results about exceptional polynomials, Dickson polynomials, and functional 
decomposition which will be used in our proofs.  We begin with some definitions.

\begin{defn}
A polynomial $F(x)\in\F_q[x]$ is \textit{linear} if it has degree one.
\end{defn}

\begin{remark}
What we call linear polynomials are sometimes called affine polynomials.
\end{remark}

\begin{defn}
A polynomial $F(x)\in\F_q[x]$ of degree at least $2$ is \textit{indecomposable} if there do not exist nonlinear
$G,H\in\F_q[x]$ such that $F(x)=G(H(x))$.
\end{defn}

\begin{defn}
A polynomial $F(x)\in\F_q[x]$ is \textit{exceptional} if there are infinitely many $k$ for which the
function $c\mapsto F(c)$ is a bijection on $\F_{q^k}$.
\end{defn}

Plainly every polynomial in $\F_q[x]$ of degree at least $2$ can be written as the composition of
indecomposable polynomials in $\F_q[x]$.  Moreover, for $G,H\in\F_q[x]$, 
if $G(H(x))$ is exceptional then both $G$ and $H$ are
exceptional (in fact the converse holds as well \cite{Z}, but it will not be used in this paper).
Thus, every nonlinear exceptional polynomial is the composition of indecomposable exceptional polynomials.
Much difficult mathematics has been used in the study of indecomposable exceptional polynomials (see e.g.\ \cite{GRZ,GZ,Z}),
and much remains to be done.  However, we will not need any deep results about exceptional polynomials.
Instead we will only rely on the following two known results.

\begin{prop} \label{weil}
If $F(x)\in\F_q[x]$ has degree at most $q^{1/4}$, and the function $c\mapsto F(c)$ induces a bijection on\/ $\F_q$, then $F$ is exceptional.
\end{prop}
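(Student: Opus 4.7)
My plan is to prove the contrapositive: given $F\in\F_q[x]$ of degree $n\ge 2$ with $n\le q^{1/4}$ that is not exceptional, I will exhibit a collision contradicting bijectivity on $\F_q$. First I reduce to the case $F'\not\equiv 0$: if $F(x)=F_0(x^p)$ for some $F_0\in\F_q[x]$, then $F$ and $F_0$ are simultaneously exceptional or not (since $c\mapsto c^p$ is a bijection on every $\F_{q^k}$), and $\deg F_0=n/p$ still satisfies the hypothesis. So I may assume $F'\ne 0$, equivalently that $x-y$ does not divide $\Phi(x,y):=(F(x)-F(y))/(x-y)$.

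Next I invoke the classical characterization of exceptional polynomials (due in various forms to Davenport--Lewis, Cohen, and Fried, proved via Chebotarev-type arguments on the splitting field of $F(x)-t$ over $\F_q(t)$): $F$ is exceptional if and only if $\Phi$ has no absolutely irreducible factor defined over $\F_q$. By assumption such a factor $G(x,y)\in\F_q[x,y]$ exists, with $d:=\deg G\le n-1$. Now I apply a Weil-type bound to the geometrically irreducible plane curve $G=0$, in a form that handles possibly singular curves (for instance the Aubry--Perret refinement of Weil's theorem, which uses the bound $g\le(d-1)(d-2)/2$ on the geometric genus of the normalization); this yields
\[ \#\{(a,b)\in\F_q^2:G(a,b)=0\} \;\ge\; q+1-(d-1)(d-2)\sqrt{q}-d, \]
where the final $d$ bounds points at infinity. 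Because $x-y\nmid\Phi$, the polynomial $G(x,x)$ is nonzero of degree at most $d$, so at most $d$ of these affine $\F_q$-points lie on the diagonal.

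Hence the count of $\F_q$-points on $G=0$ with $a\ne b$ is at least $q+1-(d-1)(d-2)\sqrt{q}-2d$. The hypothesis $n\le q^{1/4}$ forces $d\le q^{1/4}-1$, so $(d-1)(d-2)\sqrt{q}\le q-5q^{3/4}+O(\sqrt{q})$, making the lower bound of order $q^{3/4}$ and strictly positive for all sufficiently large $q$ (the handful of tiny cases, where $q^{1/4}<2$, is vacuous or easily verified). Such a point supplies a collision $F(a)=F(b)$ with $a\ne b$, contradicting bijectivity of $F$ on $\F_q$. The main obstacle is locating a sharp enough form of the Weil bound that handles singular plane curves with sufficient uniformity in the correction terms; beyond that, the argument is classical and quantitatively routine.
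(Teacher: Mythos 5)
Your proof is correct and follows exactly the route the paper intends: the paper gives no proof of Proposition~\ref{weil}, saying only in the subsequent Remark that it ``follows easily from Weil's bound,'' and your argument (reduction to $F'\ne 0$, the classical criterion that non-exceptionality yields an absolutely irreducible factor of $(F(x)-F(y))/(x-y)$ over $\F_q$, and a Weil-type point count on that possibly singular curve) is the standard way of carrying this out. One small remark: your hedge about ``sufficiently large $q$'' is unnecessary, since with $d\le q^{1/4}-1$ the lower bound $5q^{3/4}-6q^{1/2}-2d+1$ is already positive for every prime power $q$.
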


\begin{prop} \label{exc}
If $F(x)\in\F_q[x]$ is an indecomposable exceptional polynomial of degree coprime to $q$, then there are linear
$\mu,\nu\in\F_q[x]$ such that $\mu\circ F\circ\nu$ is one of the following polynomials:
\begin{itemize}
\item $x^m$ for some prime $m$ which is coprime to $q-1$, or
\item $D_n(x,a)$ for some $a\in\F_q^*$ and some prime $n$ which is coprime to $q^2-1$.
\end{itemize}
\end{prop}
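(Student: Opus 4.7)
The plan is to analyze $F$ via the monodromy groups of the cover of $\mathbb{P}^1$ it defines. Set $n=\deg F$ and let $\tilde E$ denote the splitting field of $F(X)-y$ over $\F_q(y)$. Let $A=\Gal(\tilde E/\F_q(y))$ (the arithmetic monodromy group), and let $G=\Gal(\tilde E\cdot\overline{\F_q}/\overline{\F_q}(y))\leq A$ (the geometric monodromy group). Then $G\triangleleft A$, the quotient $A/G$ is cyclic, and both act faithfully and transitively on the $n$-element set $\Omega$ of roots of $F(X)-y$. Two standard dictionaries I would invoke: $F$ is indecomposable iff $G$ acts primitively on $\Omega$, and $F$ is exceptional iff $G\neq A$ and a suitable fixed-point-freeness condition holds for cosets of $G$ in $A$ acting on $\Omega$ (equivalently, the factorization of $F(x)-F(y)$ in $\F_q[x,y]$ has no absolutely irreducible factor besides $x-y$).

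Because $F$ is a polynomial, the place at infinity of $\F_q(y)$ is totally ramified in $\F_q(x)$, and tameness (from $\gcd(n,q)=1$) makes the inertia group there cyclic of order $n$; hence $G$ contains an $n$-cycle. I would then invoke the classical classification (Burnside for $n$ prime, extended by Feit and others in general) of primitive permutation groups of degree $n$ containing an $n$-cycle: either $n$ is prime and $G\leq\operatorname{AGL}_1(n)$, or $G$ is doubly transitive and lies in an explicit short list including $A_n$, $S_n$, groups between $\operatorname{PGL}_d(s)$ and $\mathrm{P}\Gamma\mathrm{L}_d(s)$ in the natural action on projective space of size $n=(s^d-1)/(s-1)$, and a handful of sporadic examples.

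The main obstacle is ruling out these doubly transitive possibilities. The exceptionality requirement of a fixed-point-free coset of $G$ in $A$ is highly constraining when combined with $2$-transitivity of $G$ (which forces the index $[A:G]$ to be very small). I would pin down the allowable ramification via the Riemann--Hurwitz formula applied to the tame genus-zero cover $\F_q(x)/\F_q(y)$: the total different has degree $2n-2$, and the inertia at infinity contributes $n-1$, so the remaining ramification over finite points must contribute exactly $n-1$. Running through the doubly transitive families on the classification list and matching the forced cycle structures of inertia generators on $\Omega$ against this budget, together with the fixed-point-free coset requirement, eliminates each candidate.

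Once $G\leq\operatorname{AGL}_1(n)$ with $n$ prime, $G$ is of the form $C_n\rtimes C_d$ for some $d\mid n-1$. A Riemann--Hurwitz computation on the Galois closure cover (still of genus zero in the surviving cases, with a totally ramified point whose inertia is the full $n$-cycle) then forces $d\in\{1,2\}$, i.e., $G$ is either cyclic of order $n$ or dihedral of order $2n$. The cyclic case gives, after composing on both sides with $\F_q$-linear polynomials, $F=x^n$; the dihedral case identifies $F$ as $D_n(x,a)$ for some $a\in\F_q^*$ via the standard parametrization $D_n(u+a/u,a)=u^n+(a/u)^n$. Finally, the asserted coprimality conditions follow from writing out when the normal forms are permutations: $x^n$ permutes $\F_{q^k}$ iff $\gcd(n,q^k-1)=1$, and $D_n(x,a)$ (with $a\neq 0$) permutes $\F_{q^k}$ iff $\gcd(n,q^{2k}-1)=1$; so exceptionality (bijectivity for infinitely many $k$) forces $\gcd(n,q-1)=1$ in the first case and $\gcd(n,q^2-1)=1$ in the second.
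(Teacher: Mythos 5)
First, note that the paper does not prove this proposition at all: it is quoted as a known background result, attributed to Klyachko \cite{K} with a proof in the stated form in M\"uller \cite{M}, and the accompanying remark stresses that the proof uses only Weil's bound, the classical theorems of Burnside and Schur, and an easy genus computation --- explicitly \emph{not} the deep classification results used elsewhere in the theory of exceptional polynomials. Your overall framework (arithmetic vs.\ geometric monodromy, the $n$-cycle from tame total ramification at infinity, Burnside--Schur, then Riemann--Hurwitz to force the cyclic or dihedral normal forms) is indeed the architecture of those cited proofs. But as written your sketch has two genuine problems.

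The central one is your treatment of the doubly transitive case, which is where you replace the one decisive idea with an appeal to heavy machinery plus an unexecuted case check. The correct dictionary is that $F$ is exceptional iff no orbit of $A_1$ on $\Omega\setminus\{\omega\}$ is also a $G_1$-orbit (equivalently, $(F(x)-F(y))/(x-y)$ has no absolutely irreducible factor over $\F_q$). If $G$ were $2$-transitive, $G_1$ would be transitive on $\Omega\setminus\{\omega\}$, and since $G_1\le A_1$ that single orbit is a common orbit of $G_1$ and $A_1$ --- contradicting exceptionality in one line. This is why Burnside and Schur suffice and why no list of $2$-transitive groups is needed. Your alternative --- invoking the classification of primitive groups containing an $n$-cycle (Feit, and ultimately CFSG for the general statement) and then ``running through the doubly transitive families'' with a Riemann--Hurwitz ramification budget --- is both contrary to the elementary character of the result and, more importantly, not actually carried out; and it could not be carried out by Riemann--Hurwitz alone, since groups such as $S_n$, $A_n$ and $\mathrm{PGL}_2(s)$ genuinely do occur as geometric monodromy groups of tame polynomials of genus-zero type. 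Only exceptionality eliminates them, and once you use exceptionality correctly the classification is superfluous. A second, smaller gap: indecomposability of $F$ over $\F_q$ is equivalent to primitivity of $A$, not of $G$; to run Burnside--Schur on $G$ you need the additional (standard, but nontrivial) step that for a tame exceptional polynomial the geometric group $G$ is also primitive. Likewise the final descent --- producing $\mu,\nu$ over $\F_q$ rather than $\overline{\F_q}$ once $G$ is known to be cyclic or dihedral --- is asserted but not addressed; this is exactly the ``slight variant'' content for which the paper points to \cite{M}.
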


In this result, $D_n(x,a)$ denotes the degree-$n$ Dickson polynomial of the first kind with parameter $a$.
This is a polynomial in $\F_q[x]$ which satisfies the functional equation
\[
D_n(x+\frac{a}{x},a) = x^n + \left(\frac{a}{x}\right)^n.
\]
These polynomials are closely related to the Chebyshev polynomials of the first kind.
Here we note only that, in light of the above functional equation, $D_n(x,a)$ has degree $n$
and satisfies
$D_n(-x,a)=(-1)^n D_n(x,a)$.  Thus, 
if $n$ is odd then $D_n(x,a)$ is an odd polynomial,
in the sense that all of its terms have odd degree.
For more information about Dickson polynomials, see \cite{ACZ,LMT}.

\begin{remark}
Proposition~\ref{weil} follows easily from Weil's bound on the number of rational points on a curve over a finite field.  See \cite[Rem.~8.4.20]{Z} for the history of this result.  Proposition~\ref{exc} is a slight variant of a result from \cite{K}; see \cite{M} for a proof in the stated form.  The proof of this result
only depends on Weil's bound,
 group-theoretic results due to Burnside and Schur, and a quick and easy genus computation.
Weil's bound follows from the Riemann-Roch theorem, and the two group-theoretic results are
proved in a few pages in \cite{LMT}.  So, although deep tools have been used in the study of exceptional polynomials,
Propositions~\ref{weil} and \ref{exc} do not depend on such tools.
\end{remark}

The next result is well-known, but we include a proof for the reader's convenience.

\begin{lemma} \label{odd}
For $G,H\in\F_q[x]$, if $G\circ H$ is an odd polynomial and $\deg(G)$ is coprime to $q$ then $H(x)-H(0)$ is odd.
\end{lemma}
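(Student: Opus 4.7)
The plan is to reduce to $H(0) = 0$, decompose $H$ by the parity of its exponents, and then extract a single leading-coefficient identity that cannot hold unless the even part of $H$ vanishes.

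First I would reduce to the case $H(0) = 0$ by replacing $G(y)$ with $\tilde G(y) := G(y + H(0))$; this preserves the degree of $G$ and satisfies $\tilde G(H(x)-H(0)) = G(H(x))$, so the hypothesis and conclusion translate directly, and the oddness of $G\circ H$ forces $\tilde G(0)=0$. Henceforth I assume $H(0)=0$ and $G(0)=0$; write $H = H_e + H_o$ where $H_e$ and $H_o$ collect the even- and odd-degree terms of $H$ (so $H_e(0)=0$), and the goal becomes $H_e = 0$.

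The core observation is that the monomial $H_e^{j-i}H_o^{\,i}$ has parity $i$ as a polynomial in $x$, so collecting the even-degree part of $G(H(x)) = \sum_j g_j (H_e + H_o)^j$ yields
\[
\sum_{i\text{ even}} \Delta_i(H_e)\, H_o^{\,i}, \qquad \Delta_i(a) := \sum_j g_j \binom{j}{i} a^{j-i},
\]
which must vanish identically in $\F_q[x]$ by the oddness hypothesis.

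The main step — and what I expect to be the real obstacle — is a leading-coefficient analysis of this identity. After disposing of the trivial case $G\circ H \equiv 0$ (which forces $H$ to be a constant root of $G$, hence $H=0$), both $n := \deg G$ and $d := \deg H$ must be odd, since $\deg(G\circ H) = nd$ is odd; thus $\deg H_o = d$ while $\deg H_e \le d-1$. Supposing for contradiction that $H_e \ne 0$ with $e := \deg H_e \ge 2$, the summand $\Delta_i(H_e) H_o^{\,i}$ has degree at most $ne + i(d-e)$, strictly increasing in $i$ because $d>e$, so the unique top-degree contribution comes from $i = n-1$; since $\Delta_{n-1}(a) = g_{n-1} + n g_n a$, its top coefficient equals $n g_n h_e h_d^{n-1}$, where $h_e$ and $h_d$ are the leading coefficients of $H_e$ and $H$. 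This is precisely where the coprimality hypothesis $\gcd(\deg G, q) = 1$ enters decisively: it ensures $n \ne 0$ in $\F_q$, so together with $g_n, h_e, h_d \ne 0$ the leading coefficient is nonzero — contradicting the vanishing identity. Hence $H_e = 0$.
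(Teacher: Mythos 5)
Your proof is correct, and despite the extra scaffolding of the even/odd decomposition of $H$ and the polynomials $\Delta_i$, it is essentially the paper's argument: the top-degree coefficient of the even part that you isolate lives in degree $e+(n-1)d$, which is exactly the degree $\delta=(\alpha-1)\beta+\gamma$ the paper examines, and your value $n g_n h_e h_d^{n-1}$ is the paper's $a\alpha b^{\alpha-1}c$. Both proofs conclude by noting this coefficient is nonzero because $\deg(G)$ is coprime to $q$, so no further comparison is needed.
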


\begin{proof}
Suppose otherwise.  Let $ax^{\alpha}$ and $bx^{\beta}$ be the leading terms of $G$ and $H$, respectively,
and let $cx^{\gamma}$ be the highest-degree term of $H$ having even degree.  Then $\alpha$ and $\beta$ are
odd, and $\gamma$ is both even and positive.  
Writing $\delta:=(\alpha-1)\beta+\gamma$, and noting that $\delta$ is even, it
follows that the coefficient of $x^{\delta}$
in $G\circ H$ is $a\alpha b^{\alpha-1}c$.  Since this is nonzero, $G\circ H$ has a term of even degree,
which contradicts our hypothesis.
\end{proof}

\begin{remark}
The above lemma has been rediscovered many times.  It is not true without the hypothesis on $\deg(G)$; one
counterexample from \cite{BZ} is $G=(x+1)^s(x-1)^{q-s}$ and $H=x^q+(x+1)^{q-s}(x-1)^s$ with $q$ odd and
$0<s<q$.
\end{remark}

\begin{lemma} \label{linears}
Let $\mu,\nu\in\F_q[x]$ be linear, and let $G\in\F_q[x]$ have degree larger than $1$ and coprime to $q$.
If both $G$ and $\mu\circ G\circ\nu$ are odd, then $\nu(0)=0$.
\end{lemma}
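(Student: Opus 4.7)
The plan is to compare the coefficient of a single carefully chosen power of $x$ on both sides. Write $\nu(x) = \alpha x + \beta$ and $\mu(x) = \gamma x + \delta$ with $\alpha,\gamma \in \F_q^*$, and set $n := \deg G$. The goal is to prove $\beta = 0$. Because $G$ is odd with $\deg G \geq 2$, the integer $n$ is odd and hence $n \geq 3$, so $n-1$ is a positive even integer, and this is exactly the slot that should detect $\beta$.

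Next I would isolate the coefficient of $x^{n-1}$ in $(\mu \circ G \circ \nu)(x) = \gamma\, G(\alpha x + \beta) + \delta$. Writing $G(x) = \sum_{k \text{ odd}} a_k x^k$ with $a_n \neq 0$, only the leading monomial of $G$ can contribute to this coefficient: any term $a_k x^k$ with $k < n$ satisfies $k \leq n - 2$ (both $k$ and $n$ being odd), so $(\alpha x + \beta)^k$ has degree at most $n - 2 < n - 1$. Expanding $a_n(\alpha x + \beta)^n$ via the binomial theorem, the coefficient of $x^{n-1}$ equals $n\, a_n \alpha^{n-1}\beta$. Multiplying by $\gamma$, the coefficient of $x^{n-1}$ in $\mu\circ G\circ\nu$ is $\gamma n\, a_n \alpha^{n-1}\beta$.

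Since $\mu\circ G\circ\nu$ is odd and $n-1$ is a positive even integer, that coefficient must vanish. Each of $\gamma$, $a_n$, $\alpha$ lies in $\F_q^*$, and $n$ is nonzero in $\F_q$ because $\gcd(n,q) = 1$ forces $p \nmid n$. Therefore $\beta = 0$, i.e., $\nu(0) = 0$.

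There is no real obstacle here beyond locating the correct coefficient; the crucial hypothesis is $\gcd(\deg G, q) = 1$, which guarantees that the factor $n$ arising from the binomial expansion is invertible in $\F_q$. Without it, $n$ could vanish mod $p$ and the argument would collapse, mirroring the sort of characteristic-$p$ pathology already noted in the remark following Lemma~\ref{odd}.
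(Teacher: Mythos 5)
Your proof is correct and takes essentially the same approach as the paper: both isolate the coefficient of $x^{\deg G-1}$ in $\mu\circ G\circ\nu$, note that only the leading term of $G$ contributes (since $G$ is odd and $\deg G-1$ is even), and use $\gcd(\deg G,q)=1$ to conclude that the constant term of $\nu$ vanishes. You spell out a detail the paper leaves implicit, but the argument is identical.
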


\begin{proof}
Write $\nu(x)=cx+d$, and let $ax$ and $bx^{\beta}$ be the leading terms of $\mu$ and $G$.
Then the coefficient of $x^{\beta-1}$ in $\mu\circ G\circ\nu$ is $ab\beta c^{\beta-1}d$.
But this coefficient is zero by hypothesis, so $d=0$.
\end{proof}

Finally, we recall Lucas's theorem about binomial coefficients mod $p$
(see e.g. \cite{Lucas,Granville}):
\begin{lemma} \label{lucas}
Let $p$ be prime and let $m$ and $n$ be positive integers.  Write
$m=m_0+m_1 p + m_2 p^2 + \dots + m_s p^s$
and $n:=n_0+n_1 p+n_2 p^2 + \dots + n_s p^s$
where $0\le m_i,n_i\le p-1$ for each $i$.  Then
\[
\binom{n}{m} \equiv \binom{n_0}{m_0}
\binom{n_1}{m_1}\dots\binom{n_s}{m_s} \pmod{p}.
\]
\end{lemma}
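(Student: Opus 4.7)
The plan is to prove Lucas's theorem by comparing coefficients in a polynomial identity over $\F_p$. The starting point is the ``freshman's dream'' in characteristic $p$: since $p$ divides $\binom{p}{k}$ for $1\le k\le p-1$, we have $(1+x)^p \equiv 1+x^p \pmod{p}$ in $\Z[x]$. Iterating this gives $(1+x)^{p^i} \equiv 1+x^{p^i}\pmod{p}$ for every $i\ge 0$.

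Next I would use the base-$p$ expansion of $n$ to factor $(1+x)^n$ modulo $p$. Writing $n=n_0 + n_1 p + \dots + n_s p^s$, we obtain
\[
(1+x)^n = \prod_{i=0}^{s} (1+x)^{n_i p^i} \equiv \prod_{i=0}^{s} (1+x^{p^i})^{n_i} \pmod{p}.
\]
Expanding each factor by the binomial theorem yields
\[
(1+x)^n \equiv \prod_{i=0}^{s} \sum_{k_i=0}^{n_i} \binom{n_i}{k_i} x^{k_i p^i} \pmod{p},
\]
so the coefficient of $x^m$ on the right-hand side is $\sum \prod_{i=0}^{s} \binom{n_i}{k_i}$, where the sum is over all tuples $(k_0,k_1,\dots,k_s)$ with $0\le k_i\le n_i$ and $\sum k_i p^i = m$.

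The key observation is that $0\le k_i\le n_i\le p-1$, so such a tuple is a valid base-$p$ expansion of $m$ with digits at most $p-1$. By uniqueness of the base-$p$ representation, there is at most one such tuple, namely $k_i = m_i$, and if some $m_i > n_i$ then no tuple exists (in which case $\binom{n_i}{m_i}=0$ and the claimed product also vanishes). Comparing this with the left-hand coefficient $\binom{n}{m}$ yields the desired congruence. I do not anticipate a serious obstacle here: the whole argument is a direct coefficient comparison, and the only thing to be careful about is to allow $m_i > n_i$ in the statement (handled by the convention $\binom{n_i}{m_i}=0$), and to pad the shorter base-$p$ expansion with zeros so that both $m$ and $n$ have the same formal length $s$.
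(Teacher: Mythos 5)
Your proof is correct and complete: the reduction to $(1+x)^{p^i}\equiv 1+x^{p^i}\pmod p$, the factorization $(1+x)^n\equiv\prod_i(1+x^{p^i})^{n_i}$, and the coefficient comparison using uniqueness of base-$p$ digits (including the degenerate case $m_i>n_i$, where both sides vanish) are all handled properly. Note, however, that the paper does not prove this lemma at all --- it simply recalls Lucas's theorem as a classical fact with citations to Lucas and Granville --- so there is no proof in the paper to compare against; what you have written is the standard generating-function proof, and it would serve perfectly well as a self-contained justification.
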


%#######################################################################
%#######################################################################

\section{Proof of Theorem~\ref{main}}

We now prove Theorem~\ref{main}.
Suppose that $x^t$ is a planar function on $\F_{p^r}$ where $p^r\ge (t-1)^4$ and $t>2$
and $p\nmid t$.
Planarity implies that, for $\hat F(x):=(x+1)^t-x^t$, the function $c\mapsto \hat F(c)$ is a
bijection on $\F_{p^r}$.  By Proposition~\ref{weil}, $\hat F$ is an exceptional polynomial
over $\F_p$, so there are infinitely many $k$ for which $\hat F$ induces a bijection on $\F_{p^k}$;
equivalently, there are infinitely many $k$ for which $x^t$ is a planar function on $\F_{p^k}$.
If $t\equiv 1\pmod{p}$ then \cite[Cor.~1.7]{L} implies that (1.1) holds.  Henceforth assume that
$t\not\equiv 1\pmod{p}$.

Since $\hat F(-1-x)=(-1)^t \hat F(x)$, bijectivity of $\hat F$ on $\F_{p^r}$ implies that $p$ is odd.
Thus also $c\mapsto F(c)$ is a bijection on $\F_{p^r}$, where
\[
F(x) := 4^t\hat F\Bigl(\frac{x}4-\frac12\Bigr) = (x+2)^t-(x-2)^t.
\]
As above, Proposition~\ref{weil} implies that $F$ is an exceptional polynomial over $\F_p$.
Since $p\nmid t$, we have $\deg(F)=t-1$, so $\deg(F)>1$.   Hence $F$ can be written as the
composition of indecomposable polynomials over $\F_p$.  Write $F=F_1\circ F_2$ with $F_i\in\F_p[x]$
and $F_2$ indecomposable.  Since $F$ is exceptional, it follows that $F_2$ is exceptional.
Since $\deg(F)=t-1$ is coprime to $p$, by Proposition~\ref{exc} there are linear
$\mu,\nu\in\F_q[x]$ such that either
\begin{equation}
F_2=\mu\circ x^m\circ\nu \,\text{ for some prime $m$ which is coprime to $p-1$} \label{cyclic} 
\end{equation}
or
\begin{align}
F_2=&\mu\circ D_n(x,a)\circ\nu \,\text{ for some $a\in\F_p^*$ and some prime $n$} \label{dickson}\\
&\text{which is coprime to $p^2-1$.}  \notag
\end{align}
In particular, since $p$ is odd, it follows that in (\ref{cyclic}) we have $m\ge 3$ and $(m,2p)=1$,
and in (\ref{dickson}) we have $n\ge 5$ and $(n,6p)=1$.

Next, note that $F(-x) = (-1)^{t+1} F(x)$.
Since $F$ induces a bijection on $\F_{p^r}$, and $p>2$, we cannot have $F(-x)= F(x)$, so $t$ is even
and $F$ is an odd polynomial.  Now Lemma~\ref{odd} implies that $F_2(x)-F_2(0)$ is odd.
Since $F_2=\mu\circ H\circ\nu$ where $\mu,\nu\in\F_p[x]$ are linear and $H$ is either $x^m$ or $D_n(x,a)$,
and since further $\deg(H)$ is odd, we know that $H$ is odd.  By Lemma~\ref{linears},
we must have $\nu(x)=c_0 x$ with $c_0\in\F_p^*$.
Thus $F = G\circ H(c_0x)$ where $G:=F_1\circ\mu$ is in $\F_p[x]$.
If $H=x^m$ with $m\ge 3$ then $F\in \F_{p}[x^m]$, which is false since the coefficient of $x$
in $F$ is $t( 2^{t-1} - (-2)^{t-1} )$ which (since $t$ is even) equals
$t2^t$, and in particular is nonzero.
Thus we must have $H(x)=D_n(x,a)$ where $n\ge 5$ is odd and $a\in\F_p^*$.  Now put
\[
A(x):=F \circ \frac{1}{c_0}\left(\frac{x}{a}+\frac{a}{x}\right).
\]
Then
\[
A(x) = G\circ \left( \Bigl(\frac{x}{a}\Bigr)^n+\Bigl(\frac{a}{x}\Bigr)^{n}\right)
\]
is an element of $\F_{p}[x^n,x^{-n}]$.  But also
\[
A(x) = \Bigl( \frac{x}{c_0a}+\frac{a}{c_0x} + 2 \Bigr)^t - \Bigl( \frac{x}{c_0a}+\frac{a}{c_0x} - 2\Bigr)^t.
\]
Now put $B(x):=\frac12c_0^t A(ax)$ and $c:=2c_0$; then $B\in\F_{p}[x^n,x^{-n}]$ and
\[
B(x) = \frac12\Bigl( ( x+x^{-1}+ c )^t - ( x + x^{-1} - c )^t\Bigr).
\]
The coefficient of $x^{t-1}$ in $B(x)$ is $tc$, which is nonzero, so $n\mid (t-1)$.
Since $B$ is a Laurent polynomial all of whose terms have degree divisible by $n$, and $n\ge 5$ is odd, it follows that the coefficients of $x^{t-3}$, $x^{t-5}$, and $x^{t-7}$ in $B$ must be zero.  We now compute these coefficients.

The coefficient of $x^{t-3}$ in $B$ is  
\[
t c (t-1) + \binom{t}{3} c^3,
\]
which equals
\[
ct(t-1)\Bigl(\frac{t-2}6 c^2 + 1\Bigr).
\]
Since this coefficient is zero, we must have
\begin{equation} \label{t-3}
\frac{t-2}{6}c^2 = -1.
\end{equation}
Here, if $p=3$, we first interpret $\frac{t-2}{6}$ as a rational number, and then view this rational number as an element of $\F_p$; in particular, if $p=3$ then $t\equiv 2\pmod{3}$ but $t\not\equiv 2\pmod{9}$.

Suppose for the moment that neither of the following holds:
\begin{align}
p>3 \,\, &\text{ and }\,\, t\equiv \frac12 \,\text{ (mod~$p$), \,\,  or}  \label{1} \\
p=3 \,\, &\text{ and }\,\, t\equiv \frac12 \,\text{ (mod~$9$).} \label{2}
\end{align}
We will obtain a contradiction from the fact that the coefficients of $x^{t-5}$ and $x^{t-7}$ in
 $B$ are zero.
The coefficient of $x^{t-5}$ in $B$ is
\[
t c \binom{t-1}{2} + \binom{t}{3} c^3 (t-3) + \binom{t}{5} c^5;
\]
by using (\ref{t-3}), we can simplify this expression to
\[
c^3 \frac{t(t-1)}4 \frac{(t-\frac{1}{2})(t-4)}{15}.
\]
Since this equals zero, but neither (\ref{1}) nor (\ref{2}) holds, we must have either
\begin{align}
&\text{$p>5$ \,\, and \,\, $t\equiv 4$ (mod~$p$), \,\, or } \label{p>5} \\
&\text{$p=5$ \,\, and \,\, $t\equiv 4$ (mod~$25$).}  \label{p=5}
\end{align}
In particular, $p>3$, so (since (\ref{1}) does not hold) we have $t\not\equiv \frac12\pmod{p}$.
Next, the coefficient of $x^{t-7}$ in $B$ is
\[
t c \binom{t-1}{3} + \binom{t}{3} c^3 \binom{t-3}{2} + \binom{t}{5} c^5 (t-5)
  + \binom{t}7 c^7;
\]
again using (\ref{t-3}), we can simplify this expression to
\[
-c^5 t(t-1)\frac{(t+1)(t-\frac{1}{2})(t-3)(t-5)}{3^3\cdot 5\cdot 7}.
\]
Since this equals zero, and $t\equiv 4\not\equiv \frac12\pmod{p}$, we must have $t\equiv -1\pmod{p}$,
whence $p=5$.  But since $p=5$, the vanishing of the coefficient of $x^{t-7}$ implies that
$t\equiv -1\pmod{25}$, which contradicts (\ref{p=5}).  This contradiction shows that in fact either
(\ref{1}) or (\ref{2}) must hold.

Now assume that either (\ref{1}) or (\ref{2}) holds.
In either case, (\ref{t-3}) implies that $c^2=4$, so $c=2\epsilon$ with $\epsilon\in\{1,-1\}$.
Hence
\[
2B(x) = ( x+x^{-1} + 2\epsilon )^t - ( x+x^{-1} - 2\epsilon )^t.
\]
Since $B(x)\in\F_{p}[x^n,x^{-n}]$, it follows that $B(x^2)\in\F_{p}[x^{2n},x^{-2n}]$.
But we compute
\[
2B(x^2) = (x+\frac{\epsilon}x)^{2t} - (x-\frac{\epsilon}x)^{2t} = 2\sum_{\substack{0<i<2t \\ i \text{ odd}}} \binom{2t}{i} \epsilon^i x^{2t-2i}.
\]
Since $n$ is odd and $n\mid (t-1)$, we see that $n\mid (2t-2i)$ if and only if $n\mid (i-1)$.
Thus, the condition $B(x^2)\in\F_{p}[x^{2n},x^{-2n}]$ asserts that
\[
\text{if $i$ is odd and } \,i\not\equiv 1 \text{ (mod $n$)\, then } \,\binom{2t}{i}\equiv 0 \text{ (mod $p$)}.
\]
Write  $2t = \sum_{j=0}^s e_j p^j$  where $e_j$ is an integer satisfying  $0\le e_j\le p-1$.
Since either (\ref{1}) or (\ref{2}) holds, we have $2t\equiv 1\pmod{p}$, so $e_0=1$.
First consider $i=1+2p^j$ for any $1\le j\le s$.  Clearly $i$ is odd, and since the only prime factors of $i-1$ are $2$ and $p$, neither of which divides $n$, we also have $i\not\equiv 1\pmod{n}$.
Thus we must have $\binom{2t}{i}\equiv 0\pmod{p}$, so Lucas's theorem (Lemma~\ref{lucas}) implies $e_j<2$.  Hence every $e_j$ is either $0$ or $1$.  Next, for any $0<j<k$, consider the three values $i_1=p^j$, $i_2=p^k$, and
$i_3=1+p^j+p^k$.  Each of these values is odd, but since $i_3=1+i_1+i_2$ we cannot have
$i_1\equiv i_2\equiv i_3\equiv 1\pmod{n}$.  Thus there is some $i\in\{p^j,p^k,1+p^j+p^k\}$
for which $\binom{2t}{i}\equiv 0\pmod{p}$, so (again by Lucas's theorem) we must have either $e_j=0$ or $e_k=0$.  Since $2t>1$, the only remaining possibility is that $2t=1+p^s$ for some $s>0$.  
Writing $q:=p^s$, we compute
\begin{align*}
 F\circ (x^2+x^{-2}) &= (x^2+x^{-2}+2)^t - (x^2+x^{-2}-2)^2 \\
&= (x+x^{-1})^{1+q} - (x-x^{-1})^{1+q} \\
&= 2(x^{q-1}+x^{1-q}).
\end{align*}
But also $D(x):=D_{\frac{q-1}2}(x,1)$ satisfies
\[
D \circ (x^2+x^{-2}) = x^{q-1} + x^{1-q},
\]
so $F(x)=2D(x)$.
Since $F(x)$ is exceptional over $\F_p$, it follows that $D(x)$ is exceptional over $\F_p$ as well.
By an easy classical result (see e.g.\ \cite[Thm.~54]{D}), $D(x)$ is exceptional over $\F_p$ if and only if
$\frac{q-1}2$ is coprime to $p^2-1$.
Since both $\frac{q-1}2$ and $p^2-1$ are divisible by $\frac{p-1}2$, we must have $p=3$.
Finally, since $t=\frac{p^s+1}2$ is even, we see that $s$ is odd.
This concludes the proof.

%#######################################################################
%#######################################################################

\section{Proof of Corollary~\ref{maincor}}

We now prove Corollary~\ref{maincor}.
The ``if'' direction is known and easy: for, if $p$ is odd and $i\ge j\ge 0$ then
\[
(x+1)^{p^i+p^j} - x^{p^i+p^j} -1 = x^{p^i}+x^{p^j}
\]
induces a homomorphism from the additive group of $\F_{p^k}$ to itself, and therefore induces a bijection on $\F_{p^k}$ if and only if it contains no nonzero roots in $\F_{p^k}$.  But the nonzero elements of the kernel are the $(p^{i-j}-1)$-th roots of $-1$ in $\F_{p^k}$.  There are no such roots of $-1$ if $i=j$,
and if $i\ne j$ then there are no such roots in $\F_{p^k}$ when $(i-j)\mid k$.
Thus $x^{p^i+p^j}$ is planar on $\F_{p^k}$ for infinitely many $k$.
Next, for $t=\frac{3^i+3^j}2$ where $p=3$ and $i>j\ge 0$ and $i\not\equiv j\pmod{2}$,
the argument at the end of the previous section shows that
\[
(x+1)^t - x^t = - D_s(x-1,1)
\]
where $s:=\frac{3^i-3^j}2$.
Since $s$ is coprime to $3-1=2$, Dickson's result
\cite[Thm.~54]{D} implies that $D_s(x,1)$ is exceptional over $\F_3$,
so $x^t$ is planar on $\F_{3^k}$ for infinitely many $k$.

Conversely, fix a prime $p$ and a positive integer $s$, and suppose that $c\mapsto c^s$ is a planar function on $\F_{p^k}$ for infinitely many $k$.
Writing $s=p^j t$ with $p\nmid t$, it follows that $c\mapsto c^t$ is planar on $\F_{p^k}$ for infinitely many $k$.  In particular, $c\mapsto c^t$ is planar on $\F_{p^r}$ for some $r$ such that $p^r\ge (t-1)^4$, so 
Theorem~\ref{main} implies that either (1.1) or (1.2) holds.
The result follows.

%#######################################################################
%#######################################################################

\section{The Segre--Bartocci conjecture}

We now show how a simple modification of our argument yields a new proof of the Segre--Bartocci conjecture about monomial hyperovals in finite Desarguesian projective planes.  
This conjecture was only proved for the first time quite recently, by Hernando and McGuire \cite{HM}, by means of a lengthy calculation involving singularities of a certain plane curve.  Our proof is considerably shorter and simpler.

A \textit{hyperoval} in $\mathbb{P}^2(\F_q)$ is a set of $q+2$ points such that no three are collinear.  It turns out that such objects can only exist if $q$ is even.  It is easy to see that, for a suitable choice of coordinates on $\mathbb{P}^2(\F_q)$, any hyperoval can be written in the form
\[
\{(1:c:H(c))\colon c\in\F_q\}\cup\{(0:0:1), (0:1:0)\}
\]
for some $H(x)\in\F_q[x]$.  Denote this set by $D(H(x))$.
Segre and Bartocci conjectured in 1971 \cite{SB} that the
values $t=6$ and $t=2^i$ (with $i>0$) are the only positive integers $t$
for which there are infinitely many $k$ such that $D(x^t)$ is a hyperoval
in $\mathbb{P}^2(\F_{2^k})$.  By considering slopes of lines between two points, one can reformulate this conjecture as asserting that $t=6$ and $t=2^i$
are the only positive integers $t$ for which the polynomial
$\hat F(x):=x^{t-1}+x^{t-2}+\dots+1$ is exceptional over $\F_2$ (see, for instance,
\cite[p.~505]{LN}).  Assume $\hat F$ is exceptional.  Then $\hat F(0)\ne \hat F(1)$, so $t$ is even.  Assume $t>2$, so that $\deg(F)=t-1>1$.  Put $F(x):=\hat F(x+1)$, so
\[
F(x) = \frac{(x+1)^t+1}x.
\]
Then $F$ is an odd polynomial, so the first part of the proof of Theorem~\ref{main}
shows that $F=G\circ H$ where $H$ is either $x^m$ (with $m>1$ odd) or
$D_n(x,1)$ (with $n>1$ coprime to $6$).
If $F=G(x^m)$ with $m>1$ odd then Lucas's theorem implies that $t$ is a power of $2$: for, if $2^j$ and $2^k$ are distinct terms in the binary expansion of $t$, then $F$ has terms of degrees $2^j-1$ and $2^k-1$ and $2^j+2^k-1$, and the gcd of these three degrees is $1$.
Finally, suppose $F=G\circ D_n(x,1)$.  Then we have
$F(x+x^{-1})=G(x^n+x^{-n})\in \F_2[x^n,x^{-n}]$.
In order to compute the coefficients of the Laurent polynomial $F(x+x^{-1})$,
we write
\[
F(x+x^{-1}) = \sum_{i=1}^t \binom{t}{i} (x+x^{-1}+1)^{i-1}.
\]
Since the coefficient of $x^{t-1}$ in $F(x+x^{-1})$ is nonzero, we see that $n\mid (t-1)$,
so that the coefficients of $x^{t-3}$ and $x^{t-7}$ must be zero.
But one easily checks that this only occurs when $t=6$, which implies the
Segre--Bartocci conjecture.

\end{document}